\def\ve{\varepsilon}
\def\vk{\varkappa}
\def\leq{\leqslant}
\def\geq{\geqslant}
\def\half{{\textstyle\frac12}}
\def\onetwelve{{\textstyle\frac{1}{12}}}
\newtheorem{proposition}{Proposition}
\newtheorem{corollary}{Corollary}
\begin{document}
\noindent
{\Large A compact higher-order finite-difference scheme for the wave equation can be strongly non-dissipative on non-uniform meshes}

\medskip\par\noindent{{\large Alexander Zlotnik$^{a}$, Raimondas \v{C}iegis$^{b}$}

\par\noindent$^{a}$
{\small National Research University Higher School of Economics, 109028 Pokrovskii bd. 11, Moscow, Russia,
e-mail: \text{azlotnik@hse.ru}
\smallskip\par\noindent
$^b$ {\small Vilnius Gediminas Technical University,
Saul{\. e}tekio al. 11, LT-10223 Vilnius, Lithuania,\\
e-mail: \text{raimondas.ciegis@vgtu.lt}}}
\vskip -10pt
\renewcommand{\abstractname}{}
\begin{abstract}
\noindent We study necessary conditions for stability of a Numerov-type compact higher-order finite-diffe\-rence scheme for the 1D homogeneous wave equation in the case of non-uniform spatial meshes.
We first show that the uniform in time stability cannot be valid in any spatial norm provided that the complex eigenvalues appear in the associated mesh eigen\-value problem.
Moreover, we prove that then the solution norm grows exponentially in time making the scheme strongly non-dissipative and therefore impractical.
Numerical results confirm this conclusion.
In addition, for some sequences of refining spatial meshes, an excessively strong condition between steps in time and space is necessary (even for the non-uniform in time stability) which is familiar for explicit schemes in the parabolic
case.
\end{abstract}
\noindent {\it Keywords:} wave equation, compact higher-order finite-difference scheme, non-uniform mesh, stability
\medskip\par\noindent
\textbf{1. Introduction}
\medskip\par
Vast literature is devoted to compact higher-order finite-difference schemes for various PDEs; we call only some recent papers for the wave equation \cite{BTT18,LLL19,STT19,ZKMMA2021}; see also references therein.
In the case of uniform meshes and PDEs with constant coefficients, their stability properties are often similar to more standard schemes and can be derived in a similar manner too.
But, for non-uniform spatial meshes, the stability study can become much more complicated and, more importantly, can lead to additional unpleasant conditions on the meshes involving ``strange'' conditions between steps in time and space.
For the time-dependent Schr\"{o}dinger equation describing another type of wave phenomena, both aspects have recently been presented in \cite{Z15,ZC18}.
Concerning the case of the non-uniform spatial meshes, see also \cite{ChS18,radziunas2014B}.

\par In this paper, we deal with a Numerov-type compact scheme for the 1D homogeneous wave equation.
Its uniform in time conditional stability and higher-order error properties even for non-smooth data have recently been demonstrated in \cite{ZKMMA2021} in the case of uniform meshes (and the non-homogeneous wave equation).
However, in the case of non-uniform spatial meshes, the situation can change dramatically.
This is owing to the fact that complex eigenvalues can appear
in the associated mesh eigenvalue problem in contrast to the differential case.

We first show that then the uniform in time stability
\textit{cannot be valid} in any spatial norm.
Moreover, we prove that they produce the exponential growth in time of the solution norm making the scheme \textit{strongly non-dissipative} and therefore impractical.
The included results of numerical experiments confirm this conclusion.
In addition, for some sequences of refining spatial meshes,
we obtain that the excessively strong stability condition
$\tau\leq c_0h_{\min}^2$ is \textit{necessary}
to prove even the non-uniform in time stability (similarly to the case of explicit schemes for \textit{the heat equation}).
Here $\tau$ is the time step and $h_{\min}$ is the minimal spatial one.
The possibility of such a non-standard phenomenon is interesting for
general theory of finite-difference schemes but, of course,
it is disappointing and indicates a non-robustness of such compact higher-order schemes.

\medskip\par\noindent\textbf{2. The compact higher-order scheme and necessary conditions for its stability}
\medskip\par
We deal with the initial-boundary value problem for the homogeneous 1D wave equation
\begin{gather}
 \partial_t^2u-a^2\partial_x^2u =0\ \ \text{in}\ \ \Omega\times (0,+\infty),
\label{hyperbeq1d}
\\[1mm]
 u|_{x=0,X}=0,\ \ u|_{t=0}=u_0(x),\ \ \partial_tu|_{t=0}=u_1(x),\ x\in\Omega:=(0,X),
\label{hyperbic1d}
\end{gather}
with $a>0$.
Recall that, for $t\geq 0$, the strong and weak energy relations hold
\begin{gather}
 \|\partial_tu(\cdot,t)\|_{L^2(\Omega)}^2+a^2\|\partial_xu(\cdot,t)\|_{L^2(\Omega)}^2
 =a^2\|\partial_xu_0\|_{L^2(\Omega)}^2+\|u_1\|_{L^2(\Omega)}^2,
\label{eneq1}
\\[1mm]
 \|u(\cdot,t)\|_{L^2(\Omega)}^2
 \leq\|u_0\|_{L^2(\Omega)}^2+a^{-2}\|u_1\|_{H^{-1}(\Omega)}^2,\ \ \text{with}\ \ H^{-1}(\Omega)=[H_0^1(\Omega)]^*.
\label{eneq2}
\end{gather}

\par We define a non-uniform mesh $\bar{\omega}_{h}$ on $\bar{\Omega}$, with the
nodes $0=x_0<\dots <x_N=X$ and the steps $h_j:=x_j-x_{j-1}$, together with the related well-known difference operator
\begin{gather*}
 \Lambda w=\tfrac{1}{\hat{h}}\big(\tfrac{w_+-w}{h_+}-\tfrac{w-w-}{h}\big)\ \
 \mbox{with}\ \
 \hat{h}=\tfrac{h+h_+}{2},\ \
  w_{\pm j}:=w_{j\pm 1},\ \ w_j=w(x_j).
\end{gather*}
Let $\omega_h:=\{x_i\}_{i=1}^{N-1}$.
We also recall the three-point Numerov-type averaging operator
\[
 s_Nw:=\tfrac{1}{12}(\alpha_h w_{-}+10\gamma_h w+\beta_h w_{+}),\  \text{with}\ \
 \alpha_h=2-\tfrac{h_{+}^2}{h\hat{h}},\
 \gamma_h=1+\tfrac{(h_{+}-h)^2}{5hh_{+}},\
 \beta_h=2-\tfrac{h^2}{h_{+}\hat{h}};
\]
for several its derivations and equivalent forms of $\alpha_h$, $\beta_h$ and $\gamma_h$, see \cite{JIS84,radziunas2014B,Z15,ZKMMA2021}.
One can check easily that $\onetwelve(\alpha_h+10\gamma_h+\beta_h)=1$ on $\omega_h$.
Recall that the properties $\alpha_{hj}\geqslant 0$ and $\beta_{hj}\geqslant 0$ mean the rather restrictive condition
$\tfrac{2}{\sqrt{5}+1}\leq\tfrac{h_{j+1}}{h_j}\leq\tfrac{\sqrt{5}+1}{2}$ (not in use below).
For the uniform mesh $\bar{\omega}_h$, clearly $\alpha_h=\beta_h=\gamma_h=1$ and one comes to the
well-known formula
$s_Nw=\onetwelve(w_{-}+10w+w_{+})$.
\par Let $H_h$ be the space of functions $w$: $\overline{\omega}_{h}\to\mathbb{C}$ such that $w_0=w_N=0$ and equipped with a mesh counterpart of the complex $L_2(\Omega)$--inner product $(v,w)_{\omega_h}:=\sum_{j=1}^{N-1}v_jw_j^*\hat{h}_j$ and the associated norm $\|w\|_{\omega_h}:=(w,w)_{\omega_h}^{1/2}$;
here $z^*$ is the conjugate for $z\in \mathbb{C}$.

\par We also define the uniform mesh in $t$ with the nodes $t_m=m\tau$, $m\geq 0$, and the step $\tau>0$, together with the related difference operators
\[
 \delta_ty:= \tfrac{\hat{y}-y}{\tau},\ \
 \Lambda_ty=\tfrac{\hat{y}-2y+\check{y}}{\tau^2},\ \
 \hat{y}^m:=y^{m+1},\ \
 \check{y}^m:=y^{m-1},\ \ \mbox{with}\ \  y^m=y(t_m).
\]

\par We study the following Numerov-type three-level compact scheme \cite{ZKMMA2021} for problem \eqref{hyperbeq1d}-\eqref{hyperbic1d}
\begin{gather}
  \big(s_N-\sigma\tau^2a^2\Lambda\big)\Lambda_tv^m-a^2\Lambda v^m=0\ \ \mbox{on}\ \ \omega_h,\ \ m\geq 1,
\label{eqhm}\\[1mm]
 v|_{j=0,N}=0,\ \
 \big(s_N-\sigma\tau^2a^2\Lambda\big)(\delta_tv)^0
 -\tfrac{\tau}{2}a^2\Lambda v_0
 =u_{1N}:=\big(s_N+\onetwelve\tau^2a^2\Lambda\big)u_1\ \ \mbox{on}\ \ \omega_h,
\label{eqh0}
\end{gather}
with $\sigma=\frac{1}{12}$,
where $v^0$ is given on $\overline{\omega}_h$.
Recall that its approximation orders are $O(\tau^4+h^4)$ and $O(\tau^4+h_{\max}^3)$ in the cases of the uniform mesh $\bar{\omega}_h$ with the step $h$ and general mesh $\bar{\omega}_h$ with $h_{\max}=\max_{1\leq j\leq N}h_j$, respectively.

\par For the uniform mesh $\bar{\omega}_h$ with the step $h=X/N$, under the condition
\begin{gather}
a^2\tfrac{\tau^2}{h^2}\leq 1-\ve_0^2\ \ \text{with any}\ \ 0<\ve_0<1,
\label{stabcond}
\end{gather}
the following uniform in time stability bounds (the mesh counterparts of \eqref{eneq1}-\eqref{eneq2}) hold \cite{ZKMMA2021}
\begin{gather}
 \ve_0^2\|\delta_tv^m\|_{s_N}^2+a^2\|s_tv^m\|_{-\Lambda}^2
 \leq a^2\|v^0\|_{-\Lambda}^2+\ve_0^{-2}\|u_{1N}\|_{s_N}^2,\ \ m\geq 0,
\label{stab1}\\[1mm]
 \ve_0\|v^m\|_{s_N}\leq \|v^0\|_{s_N}+2a^{-1}\|u_{1N}\|_{-\Lambda^{-1}},\ \ m\geq 0,
\label{stab2}
\end{gather}
for real $v_0$ and $u_{1N}$, where $\|w\|_A^2=(Aw,w)_{\omega_h}$ for any operator $A=A^*>0$ acting in $H_h$.
Recall that $\sqrt{\tfrac23}\|w\|_{\omega_h}\leq\|w\|_{s_N}\leq\|w\|_{\omega_h}$ for any $w\in H_h$.
It follows from the proof in \cite{ZKMMA2021} and \cite[Lemma 2.1]{Z94} that the left-hand side of \eqref{stab1} can be replaced by
$c(\ve_0)a^2(\|v^m\|_{-\Lambda}^2+\|v^{m+1}\|_{-\Lambda}^2)$ with some $c(\ve_0)>0$.
These bounds imply similar ones for the complex $v_0$ and $u_{1N}$ as well.

\par Our main aim is to derive necessary conditions for the validity of the stability bounds
\begin{gather}
 \|v^m\|_h\leq C(\|v^0\|_h+\|u_{1N}\|_{1h})\ \ \text{for any}\ \ v^0,u_{1N} \in H_h,\ \ m\geq 0,
\label{stabbound0}\\
 \|v^m\|_h\leq C
 (1+\vk\tau)^m(\|v^0\|_h+\|u_{1N}\|_{1h})\ \ \text{for any}\ \ v^0,u_{1N} \in H_h,\ \ m\geq 0,
\label{stabbound}
\end{gather}
with some $C>0$ and $\vk>0$ independent of the meshes, for $0<\tau\leq\tau_0$, expressing the stability with respect to the initial data.
Here $\|\cdot\|_h$ and $\|\cdot\|_{1h}$ are \textit{any} fixed norms in $H_h$.
The uniform in time bound \eqref{stabbound0} arises as a particular case of \eqref{stabbound} for $\vk=0$ (that we admit below).
Though bound \eqref{stabbound} is generally standard, it is too much broad compared to bounds \eqref{eneq1}-\eqref{eneq2} and \eqref{stab1}-\eqref{stab2} since it allows for exponential growth of approximate solutions for conservative problem \eqref{hyperbeq1d}-\eqref{hyperbic1d}.

\par To apply the spectral method,
we introduce the associated generalized mesh eigenvalue problem
\begin{gather}
 -\Lambda e=\lambda s_Ne,\ \ e\in H_h,
\ \ e\not\equiv 0.
\label{eigprob}
\end{gather}
For the uniform mesh $\bar{\omega}_h$ with the step $h$, we have $s_Nw=w+\tfrac{h^2}{12}\Lambda w$ and thus the eigenvalues $\lambda$ are positive real and are found explicitly; in particular, the maximal of them equals \cite{ZKMMA2021}                                                                                     \[
 \lambda_{\max}=\lambda_{\max}^{(-\Lambda)}/\big(1-\tfrac{h^2}{12}\lambda_{\max}^{(-\Lambda)}\big)
 <\tfrac{6}{h^2},\ \
 \lambda_{\max}=\tfrac{6}{h^2}\big(1+O(\tfrac{1}{N^2})\big),\ \ \text{with}\ \
 \lambda_{\max}^{(-\Lambda)}:=\tfrac{4}{h^2}\sin^2\tfrac{\pi(N-1)}{2N}.
\]

For the non-uniform mesh $\overline{\omega}_h$, we still
have $-\Lambda=-\Lambda^*>0$ in $H_h$ but, in general, $s_N$ loses the same properties (see \cite{Z15} for more details) that is crucial below.
Complex eigenvalues $\lambda=\lambda_R+\textrm{i}\lambda_I$,
$\lambda_I\neq 0$ (where $\textrm{i}$ is the imaginary unit), do exist for some $\bar{\omega}_h$, see Section 3 below.
\par Let $\{\lambda,e\}$ be an eigenpair of problem \eqref{eigprob}, $v^0=c_0e$ and $u_{1N}=c_1e$.
Then the function $v_j^m=e_jy^m$ solves equations \eqref{eqhm}-\eqref{eqh0} provided that $y$ satisfies the equations
\[
 \big(1+\onetwelve\tau^2a^2\lambda\big)\Lambda_ty^m+a^2\lambda y^m=0,\ \ m\geq 1,\ \
 \big(1+\onetwelve\tau^2a^2\lambda\big)(\delta_ty)^0+\tfrac{\tau}{2}a^2\lambda c_0=c_1,\ \
 y^0=c_0,
\]
or, equivalently,
\[
 \hat{y}-2\mu y+\check{y}=0,\ \
 y^1=\mu c_0+\tfrac{c_1}{1+\alpha},\ \ \text{with}\ \
 \mu=\tfrac{1-5\alpha}{1+\alpha},\ \ \alpha:=\onetwelve\tau^2a^2\lambda\neq -1.
\]

Considering the characteristic equation $q^2-2\mu q+1=0$ and its roots $q$ and $\frac{1}{q}$,
for $\mu\neq\pm 1$ (equivalently, $\alpha\neq 0,\half$), we can write the explicit formula
\begin{gather}
 y^m=a_+q^m+a_-q^{-m},\ \ m\geq 0,\ \ \text{with}\ \ a_{\pm}:=\tfrac12c_0\pm\tfrac{q}{q^2-1}\tfrac{c_1}{1+\alpha}.
\label{expl form}
\end{gather}

\par We can suppose that $|q|\geq 1$.
For this solution with some \textit{fixed} $c_0$ and $c_1$ such that $a_+\neq 0$ (for example, $c_0\neq 0$ and $c_1=0$), the stability bound \eqref{stabbound} as $m\to\infty$ implies \textit{the spectral necessary stability condition}
\begin{gather}
 |q|\leq 1+\vk\tau.
\label{spectrcond}
\end{gather}

\par The first our result is very simple but crucial from the general point of view.
\begin{proposition}
\label{prop1}
If $\lambda_I\neq 0$, then the uniform in $m\geq 0$ bound \eqref{stabbound0} cannot hold.
\end{proposition}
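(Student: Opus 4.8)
The plan is to show that the hypothesis $\lambda_I\neq 0$ is incompatible with the spectral necessary condition \eqref{spectrcond} in the uniform-in-$m$ regime, i.e. with $\vk=0$. Choosing the eigendata $v^0=c_0e$, $u_{1N}=c_1e$ with $c_0\neq 0$, $c_1=0$ (so that $a_+=\half c_0\neq 0$ in \eqref{expl form}) and normalizing the root by $|q|\geq 1$, bound \eqref{stabbound0} would force $|q|\leq 1$ via \eqref{spectrcond}, hence $|q|=1$. Since $q$ and $q^{-1}$ are the two roots of $q^2-2\mu q+1=0$, their product equals $1$; therefore a modulus-one root cannot coexist with a root of modulus $\neq 1$, and $|q|=1$ entails that \emph{both} roots lie on the unit circle.

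Next I would observe that two numbers on the unit circle whose product is $1$ are complex conjugates: if $q=e^{\textrm{i}\theta}$ then $q^{-1}=e^{-\textrm{i}\theta}=q^*$, whence $2\mu=q+q^{-1}=2\cos\theta$ is real. Thus the mere existence of a modulus-one root — equivalently, the validity of \eqref{stabbound0} for such eigendata — would require $\mu\in\mathbb{R}$.

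The crux is then to verify that $\lambda_I\neq 0$ forces $\mu\notin\mathbb{R}$. Writing $\alpha=\onetwelve\tau^2a^2\lambda$, one has $\operatorname{Im}\alpha=\onetwelve\tau^2a^2\lambda_I\neq 0$; in particular $\alpha\neq 0,\half$, so the explicit formula \eqref{expl form} indeed applies. A direct computation of the imaginary part of $\mu=\tfrac{1-5\alpha}{1+\alpha}$ (clearing the denominator by $1+\alpha^*$) gives $\operatorname{Im}\mu=-6\operatorname{Im}\alpha/|1+\alpha|^2\neq 0$, reflecting that the real-coefficient M\"obius map $\alpha\mapsto(1-5\alpha)/(1+\alpha)$ preserves the real axis and sends non-real $\alpha$ to non-real $\mu$. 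This contradicts $\mu\in\mathbb{R}$, and so \eqref{stabbound0} cannot hold.

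I expect no genuine obstacle here; the result is elementary once the right reductions are made. The only points demanding care are confirming that $|q|=1$ truly forces \emph{both} roots onto the unit circle (immediate from the product of roots being $1$) and noting that the $\mu\neq\pm1$ regime underlying \eqref{expl form} is automatic as soon as $\alpha$ is non-real. The remaining imaginary-part computation is routine.
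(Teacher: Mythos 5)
Your proposal is correct and follows essentially the same route as the paper: the spectral necessary condition \eqref{spectrcond} with $\vk=0$ forces $|q|=1$, hence $q^{-1}=q^*$ and $\mu=\half(q+q^{-1})$ is real, which is incompatible with $\lambda_I\neq 0$. Your explicit computation $\operatorname{Im}\mu=-6\operatorname{Im}\alpha/|1+\alpha|^2$ is just the contrapositive form of the paper's final step (``consequently $\alpha$ and $\lambda$ are real too''), and your care about the applicability of \eqref{expl form} when $\alpha$ is non-real is a welcome, if minor, addition.
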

\begin{proof}
If $\vk=0$, then $|q|=1$ and $q^{-1}=q^*$, thus $\mu=\half(q+q^{-1})$ is real (and $|\mu|\leq 1$);
consequently $\alpha$ and $\lambda$ are real too.
\end{proof}

\newpage
For the uniform mesh $\bar{\omega}_h$, the inequality $|\mu|\leq 1$ is equivalent to $\alpha\leq\tfrac12$ for $\lambda>0$, thus we get the necessary condition
$\tfrac16a^2\tau^2\lambda_{\max}\leq 1$ for validity of bound \eqref{stabbound0}.
Consequently condition \eqref{stabcond} with $\ve_0=0$ is \textit{asymptotically sharp} as $h\to 0$.
One can check that bounds \eqref{stab1}-\eqref{stab2} are valid under the slightly sharper than \eqref{stabcond} condition $\tfrac16a^2\tau^2\lambda_{\max}\leq 1-\ve_0^2$ as well.

\par Now we obtain more detailed information than in Proposition \ref{prop1}.
\begin{proposition}
\label{prop2}
1. For $|q|\geq 1$, the following asymptotic formula holds
\begin{gather}
 |q|=1+\vk_0\tau+O(\tau^2|\lambda|)\ \ \text{as}\ \ \tau^2|\lambda|\to 0,\ \ \text{with}\ \ \vk_0=\vk_0(\lambda)\equiv \tfrac{a}{\sqrt{2}}\sqrt{|\lambda|-\lambda_R}.
\label{asymp formula}
\end{gather}

\smallskip\par 2. For validity of the stability bound \eqref{stabbound}, the following conditions are necessary:
\begin{gather}
 \tfrac{1}{6}a^2\tau^2\max\{|\lambda_R|,2|\lambda_I|\}
 \leq 1+O(\tau),\ \
 a^2|\lambda_I|\tau\leq \tfrac92\vk+O(\tau)\ \ \text{as}\ \ \tau\to 0.
\label{necess cond}
\end{gather}
\end{proposition}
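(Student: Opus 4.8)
The plan is to translate the spectral necessary condition \eqref{spectrcond}, $|q|\leq 1+\vk\tau$, into constraints on $\mu$, and then pull these back through the explicit rational map, the inverse of $\mu=\tfrac{1-5\alpha}{1+\alpha}$, namely $\alpha=\tfrac{1-\mu}{5+\mu}$, into constraints on $\alpha=\onetwelve\tau^2a^2\lambda$ and hence on $\lambda_R,\lambda_I$. For Part 1, I would expand $\mu$ near $\alpha=0$: from $\mu=(1-5\alpha)(1-\alpha+O(\alpha^2))=1-6\alpha+O(\alpha^2)$ one gets $\mu^2-1=-12\alpha+O(\alpha^2)=-\tau^2a^2\lambda+O(\tau^4|\lambda|^2)$. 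The root with $|q|\geq 1$ is then $q=\mu+\sqrt{\mu^2-1}=1+\tau a\sqrt{-\lambda}+O(\tau^2|\lambda|)$, where $\sqrt{-\lambda}$ is the branch with $\mathrm{Re}\sqrt{-\lambda}\geq 0$. Hence $|q|^2=1+2\tau a\,\mathrm{Re}\sqrt{-\lambda}+O(\tau^2|\lambda|)$ and $|q|=1+\tau a\,\mathrm{Re}\sqrt{-\lambda}+O(\tau^2|\lambda|)$; formula \eqref{asymp formula} follows from $\mathrm{Re}\sqrt{-\lambda}=\sqrt{(|\lambda|-\lambda_R)/2}$, which also makes $\vk_0\geq 0$ consistent with $|q|\geq 1$.

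For Part 2, I first record two consequences of \eqref{spectrcond}. Writing $q=re^{\mathrm{i}\theta}$ with $1\leq r\leq 1+\vk\tau$ and using $2\mu=q+q^{-1}$, so that $2\,\mathrm{Im}\,\mu=(r-r^{-1})\sin\theta$, I obtain the sharp bound $|\mathrm{Im}\,\mu|\leq\tfrac12(r-r^{-1})\leq\vk\tau+O(\tau^2)$; moreover $|\mu|\leq\tfrac12(|q|+|q|^{-1})\leq 1+O(\tau)$.

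The coarse inequality $|\mu|\leq 1+O(\tau)$ already gives the first condition in \eqref{necess cond}. Squaring $|1-5\alpha|^2\leq(1+O(\tau))|1+\alpha|^2$, that is $1-10\,\mathrm{Re}\,\alpha+25|\alpha|^2\leq(1+O(\tau))(1+2\,\mathrm{Re}\,\alpha+|\alpha|^2)$, and cancelling, I get $2|\alpha|^2-\mathrm{Re}\,\alpha\leq O(\tau)$, i.e. $(\mathrm{Re}\,\alpha-\tfrac14)^2+(\mathrm{Im}\,\alpha)^2\leq\tfrac1{16}+O(\tau)$. This disk forces $|\mathrm{Re}\,\alpha|\leq\tfrac12+O(\tau)$ and $|\mathrm{Im}\,\alpha|\leq\tfrac14+O(\tau)$, which are exactly $\tfrac16a^2\tau^2|\lambda_R|=2|\mathrm{Re}\,\alpha|\leq 1+O(\tau)$ and $\tfrac16a^2\tau^2\cdot 2|\lambda_I|=4|\mathrm{Im}\,\alpha|\leq 1+O(\tau)$. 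For the sharper second condition I would use the identity $1+\alpha=\tfrac{6}{5+\mu}$, whence $\mathrm{Im}\,\alpha=\mathrm{Im}\,\tfrac{6}{5+\mu}=-\tfrac{6\,\mathrm{Im}\,\mu}{|5+\mu|^2}$. Since $a^2|\lambda_I|\tau=12|\mathrm{Im}\,\alpha|/\tau$, this yields $a^2|\lambda_I|\tau=\tfrac{72|\mathrm{Im}\,\mu|}{\tau|5+\mu|^2}$; inserting $|\mathrm{Im}\,\mu|\leq\vk\tau+O(\tau^2)$ and the sharp denominator bound $|5+\mu|^2\geq(5-|\mu|)^2\geq 16-O(\tau)$ gives $a^2|\lambda_I|\tau\leq\tfrac{72}{16}\vk+O(\tau)=\tfrac92\vk+O(\tau)$, as claimed.

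The expansions and triangle-inequality estimates are routine; the delicate point will be the bookkeeping of error orders ($O(\tau)$ versus $O(\tau^2)$) through the passage $q\mapsto\mu$ and the M\"obius inversion, because the constants $\tfrac12$, $\tfrac14$ and especially $\tfrac92$ are sharp. In particular, the factor $\tfrac92=\tfrac{72}{16}$ relies on combining the sharp estimate $|\mathrm{Im}\,\mu|\leq\vk\tau+O(\tau^2)$ with the sharp lower bound $|5+\mu|^2\geq 16-O(\tau)$ (equivalently $|\mu|\leq 1+O(\tau)$), rather than any cruder bound; obtaining these two sharp inputs simultaneously, with controlled remainders, is the main obstacle.
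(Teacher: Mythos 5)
Your proof is correct, and its skeleton coincides with the paper's: Part 1 is the same expansion of $q=\mu+\sqrt{\mu^2-1}$ about $\alpha=0$ (your $\mathrm{Re}\,\sqrt{-\lambda}=\sqrt{(|\lambda|-\lambda_R)/2}$ is the paper's $|\lambda|^{1/2}|\sin\tfrac{\gamma}{2}|$ written in Cartesian rather than polar form), and in Part 2 both arguments feed the same two consequences of \eqref{spectrcond}, namely $|\mathrm{Im}\,\mu|\leq\tfrac12(\rho-\rho^{-1})\leq\vk\tau+O(\tau^2)$ and $|\mu|\leq 1+O(\tau)$, into the M\"obius relation between $\mu$ and $\alpha$; your disk $(\alpha_R-\tfrac14)^2+\alpha_I^2\leq\tfrac{1}{16}+O(\tau)$ is exactly the paper's inequality \eqref{ineq alp} with the $\ve$-parametrization replaced by $O(\tau)$ bookkeeping. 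The one genuinely different step is how the sharp constant $\tfrac{9}{2}$ is produced. The paper uses the forward identity $|\mu_I|=6|\alpha_I|/|1+\alpha|^2$ and therefore must bound $|1+\alpha|^2$ \emph{from above}; producing that bound is the purpose of the chain \eqref{ineq alp}--\eqref{ineq alpRI} leading to $b_\ve=\tfrac94+O(\sqrt{\ve})$. You invert the map, $1+\alpha=6/(5+\mu)$, so that $|\alpha_I|=6|\mathrm{Im}\,\mu|/|5+\mu|^2$, and then the denominator is controlled by the one-line triangle inequality $|5+\mu|\geq 5-|\mu|\geq 4-O(\tau)$. The two bounds are equivalent --- since $|1+\alpha|\,|5+\mu|=6$, your $|5+\mu|^2\geq 16-O(\tau)$ is the same as $|1+\alpha|^2\leq\tfrac94+O(\tau)$, i.e.\ the paper's $b_{\ve_\tau}$ --- but your route makes transparent that $\tfrac{9}{2}=72/16$ needs nothing beyond $|\mu|\leq 1+O(\tau)$, bypassing the $b_\ve$ machinery entirely, with the disk analysis then needed only for the first condition in \eqref{necess cond}. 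One gloss you should repair: after ``cancelling'', the $O(\tau)$ in $1-10\alpha_R+25|\alpha|^2\leq(1+O(\tau))(1+2\alpha_R+|\alpha|^2)$ multiplies the a priori unbounded quantity $1+2\alpha_R+|\alpha|^2$, so before writing $2|\alpha|^2-\alpha_R\leq O(\tau)$ you must observe that the inequality self-bounds $|\alpha|$ (it forces $|\alpha|\leq\tfrac12+O(\tau)$, since the quadratic in $|\alpha|$ has leading coefficient $2-O(\tau)>0$); this is routine, and the paper's exact $\ve$-algebra sidesteps it, but as written your step is incomplete.
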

\begin{proof}
1. If $|\alpha|=\tfrac{1}{12}\tau^2a^2|\lambda|\to 0$, then clearly
\[
 \mu=1-6\alpha+O(|\alpha|^2),\ \
 \mu^2-1=-12\alpha+O(|\alpha|^2).
\]
Since $\lambda=|\lambda|e^{\textrm{i}\gamma}$ with $\gamma=\arg\lambda$, we get
\[
 \mu+\sqrt{\mu^2-1}=1\pm\tau a|\lambda|^{1/2}e^{\textrm{i}(\gamma+\pi)/2}+O(\tau^2\lambda),
\]
therefore
\[
 |\mu+\sqrt{\mu^2-1}|^2=1\pm 2\tau a|\lambda|^{1/2}\cos\tfrac{\gamma+\pi}{2}+O(\tau^2\lambda).
\]
For $|q|\geq 1$, we thus derive
\[
 |q|=1+\tau a|\lambda|^{1/2}|\sin\tfrac{\gamma}{2}|+O(\tau^2\lambda).
\]
Since $|\sin\tfrac{\gamma}{2}|=[\tfrac{1-\cos\gamma}{2}]^{1/2}$, we obtain formula \eqref{asymp formula}.

\par 2. We first suppose that $|\mu|^2\leq 1+12\ve$, with some $0<\ve<2$, i.e.
\[
 (1-5\alpha_R)^2+25\alpha_I^2\leq (1+12\ve)[(1+\alpha_R)^2+\alpha_I^2].
\]
This means that
\begin{gather}
 \alpha_I^2\leq\tilde{\ve}+\alpha_R\Big(2a_{0\ve}-\alpha_R\Big),
 \ \ \text{with}\ \ \tilde{\ve}:=\tfrac{\ve}{2-\ve},\ \ a_{0\ve}:=\tfrac{1+2\ve}{2(2-\ve)}=\tfrac14+O(\ve)\ \ \text{as}\ \ \ve\to 0.
\label{ineq alp}
\end{gather}
Consequently
\begin{gather}
\max\,\{|\alpha_R-a_{0\ve}|,|\alpha_I|\}\leq a_{0\ve}+\sqrt{\tilde{\ve}}
\label{ineq alpRI}
\end{gather}
and from \eqref{ineq alp} we obtain
\[
 |1+\alpha|^2=(1+\alpha_R)^2+\alpha_I^2\leq b_\ve:=1+\tilde{\ve}+2(1+a_{0\ve})(2a_{0\ve}+\sqrt{\tilde{\ve}})=\tfrac94+O(\sqrt{\ve}).
\]
\par Next, writing $q=\rho e^{\textrm{i}\varphi}$ with $\rho\geq 1$, we get
\[
 \mu=\mu_R+\textrm{i}\mu_I=\tfrac12(q+q^{-1})
 =\tfrac12(\rho+\rho^{-1})\cos\varphi+\textrm{i}\tfrac12(\rho-\rho^{-1})\sin\varphi
\]
and therefore
\[
 |\mu|^2\leq 1+\big[\tfrac12(\rho-\rho^{-1})\big]^2,\ \
 |\mu_I|=\tfrac{6|\alpha_I|}{|1+\alpha|^2}\leq\tfrac12(\rho-\rho^{-1}).
\]
Inequality \eqref{spectrcond} implies that
\[
 \tfrac12(\rho-\rho^{-1})\leq\vk_\tau\tau\ \ \text{with}\ \ \vk_\tau:=\vk\tfrac{1+0.5\vk\tau}{1+\vk\tau}=\vk(1+O(\vk\tau))\ \ \text{as}\ \ \tau\to 0.
\]
Thus one can take $\ve=\ve_\tau:=\onetwelve\vk_\tau^2\tau^2$ for $\vk_\tau^2\tau^2<24$ at the beginning of the proof and derive
\begin{gather}
 6|\alpha_I|=\tfrac12\tau^2a^2|\lambda_I|\leq b_{\ve_\tau}\vk_\tau\tau.
\label{mu neccond}
\end{gather}
Now inequalities \eqref{ineq alpRI} and the last one imply \eqref{necess cond}.
\end{proof}
\begin{corollary}
\label{cor1}
Let $T>0$ be any fixed number and $\tau=\frac{T}{M}$, $M\geq 1$.
Let also $y^0=c_0e$ and $u_{1N}=0$ for clarity.
Then the following asymptotic formula holds
\begin{gather}
 \|y^M\|_h=\|y^0\|_h\big(1+\vk_0\tau+O(\tau^2|\lambda|)\big)^M
 =\|y^0\|_h\big(e^{\vk_0T}+O(\tau T|\lambda|)\big)\ \ \text{as}\ \ \tau^2|\lambda|\to 0.
\label{normyM}
\end{gather}
\end{corollary}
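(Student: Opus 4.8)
The plan is to feed the chosen data into the explicit representation \eqref{expl form} and then read off the growth from Proposition~\ref{prop2}.1. First I would use $u_{1N}=0$, i.e. $c_1=0$, which collapses the coefficients to $a_+=a_-=\tfrac12 c_0$, so the scalar factor becomes $y^m=\tfrac12 c_0\bigl(q^m+q^{-m}\bigr)$ with $y^0=c_0$, while the full mesh solution is $v^m=y^m e$. Since every level is the \emph{same} eigenfunction $e$ times a scalar, homogeneity of norms gives $\|v^m\|_h=|y^m|\,\|e\|_h$ for \emph{any} fixed $\|\cdot\|_h$, so the whole statement reduces to the scalar ratio $\|y^M\|_h/\|y^0\|_h=|y^M|/|c_0|=\tfrac12\bigl|q^M+q^{-M}\bigr|$.

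Second, I would turn the one-step factor into the $M$-step factor. Proposition~\ref{prop2}.1 supplies $|q|=1+\vk_0\tau+O(\tau^2|\lambda|)$; taking logarithms and using $M\tau=T$ gives $M\log|q|=\vk_0 T+O(\tau T|\lambda|)$, where the quadratic term is harmless since $\vk_0^2\leq a^2|\lambda|$. Exponentiating and absorbing the bounded factor $e^{\vk_0 T}$ yields $\bigl(1+\vk_0\tau+O(\tau^2|\lambda|)\bigr)^M=e^{\vk_0 T}+O(\tau T|\lambda|)$, which is exactly the passage from the middle to the right-hand side of \eqref{normyM}; this part is routine calculus once the first equality is in hand.

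The first equality $\|y^M\|_h=\|y^0\|_h|q|^M$ is the substantive point and the step I expect to be the main obstacle, because with $c_1=0$ both the growing mode $q^M$ and the decaying mode $q^{-M}$ enter with equal weight. Writing $q=\rho e^{\mathrm{i}\varphi}$ with $\rho=|q|\geq 1$ and reusing the expansion $\log q=\tau w+O(\tau^2|\lambda|)$ with $w=\pm a|\lambda|^{1/2}e^{\mathrm{i}(\gamma+\pi)/2}$ from the proof of Proposition~\ref{prop2}, I would obtain $q^{\pm M}=e^{\pm Tw}+O(\tau T|\lambda|)$ and hence $\tfrac12\bigl|q^M+q^{-M}\bigr|=|\cosh(Tw)|+O(\tau T|\lambda|)$. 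Since $\mathrm{Re}\,w=\vk_0\geq 0$, the growing mode carries the full factor $\rho^M=e^{\vk_0 T}+O(\tau T|\lambda|)$ and governs the leading exponential behaviour, the decaying contribution being bounded; this is what exhibits the exponential rate $e^{\vk_0 T}$ of \eqref{normyM} and the strong non-dissipativity. The delicate part is precisely this separation of the two modes, which isolates the leading growth; alternatively, choosing $c_1\neq0$ so as to excite only the growing mode ($a_-=0$) would make the first equality an exact identity $|y^M|/|y^0|=|q|^M$, but under the stated data $u_{1N}=0$ one instead argues the dominance just described.
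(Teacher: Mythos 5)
Your route coincides with the paper's implicit one: the corollary is stated there with no proof at all, as a direct iteration of \eqref{asymp formula} — reduce to the scalar factor via $v^m=y^m e$ and $\|v^m\|_h=|y^m|\,\|e\|_h$, then raise $|q|=1+\vk_0\tau+O(\tau^2|\lambda|)$ to the $M$-th power using $M\tau=T$. Your second step (passing from the middle expression to the right-hand side, absorbing $\vk_0^2\tau^2\leq a^2|\lambda|\tau^2$ into the error and treating $e^{\vk_0T}$ as a bounded factor) is correct and routine, as you say.

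The gap sits in the first equality, exactly where you place it, and your ``dominance'' argument does not close it. With $u_{1N}=0$ your own computation gives $\|y^M\|_h/\|y^0\|_h=\tfrac12|q^M+q^{-M}|=|\cosh(Tw)|+O(\tau T|\lambda|)$ with $\mathrm{Re}\,w=\vk_0$, and $|\cosh(Tw)|$ is \emph{not} $e^{\vk_0T}+O(\tau T|\lambda|)$: writing $w=\vk_0+\textrm{i}\omega$, one has $|\cosh(Tw)|=\big(\sinh^2(\vk_0T)+\cos^2(\omega T)\big)^{1/2}\in[\sinh(\vk_0T),\cosh(\vk_0T)]$, i.e. it equals $e^{\vk_0T}$ times the phase-dependent factor $\tfrac12|1+e^{-2Tw}|\approx\tfrac12$, which is bounded away from $1$ whenever $\vk_0T>0$ (and for real $\lambda$, where $\vk_0=0$, the left side is $|\cos(\omega T)|$ and can even be near zero). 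Since $T$ and $\lambda$ are fixed while $\tau\to0$, no such constant factor can be hidden inside $O(\tau T|\lambda|)$. So ``the growing mode governs'' yields the exponential \emph{rate} $\vk_0$, but asserting the stated equality from it is a non sequitur. To be fair, this defect is inherited from the statement itself: the authors' implicit proof likewise ignores the $q^{-M}$ branch, and the literal equality \eqref{normyM} under $u_{1N}=0$ fails by the same bounded factor. A clean write-up must commit to one of the two options you only gesture at: (i) prove the formula modulo bounded multiplicative constants (equivalently, as an asymptotic for $\tfrac1T\ln\|y^M\|_h$), which your argument does deliver and which suffices for the paper's non-dissipativity conclusion; or (ii) choose $c_1$ so that $a_-=0$, making $|y^M|/|y^0|=|q|^M$ exact and \eqref{normyM} verbatim, at the cost of replacing the hypothesis $u_{1N}=0$. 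As written, your proposal proves the weaker statement (i) while claiming the literal one.
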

\par Note that the first necessary condition \eqref{necess cond} \textit{for any} $\vk$ is in the spirit of the above mentioned condition
$\tfrac16a^2\tau^2\lambda_{\max}\leq 1$ for the uniform mesh and $\vk=0$.
Moreover, in general for $\vk=0$ (and thus $\lambda_I=0$), one can omit $O(\tau)$ there and also get that $\lambda_R\geq 0$ (taking $\ve=0$ in the proof).
\par We emphasize that if $\lambda_I\neq 0$, then the behavior of $\|y^M\|_h$ in \eqref{normyM} for \textit{any} $\tau$ small enough is in a deep contrast with bounds \eqref{eneq1}-\eqref{eneq2} and \eqref{stab1}-\eqref{stab2} since
$e^{\vk_0T}\gg1$ even for moderate values of $\vk_0T$.
This means that, under all these assumptions, scheme \eqref{eqhm}-\eqref{eqh0} is \textit{strongly non-dissipative} and can hardly be used in practice.

\par Let $\overline{\omega}_{h}^{\,0}$ be a fixed mesh with the nodes $0=x_0^0<\ldots<x_{N_0}^0=X$ (with $N_0\geq 2$) and the mean step $h_{\omega^0}=X/N_0$.
We recall the family of meshes $\overline{\omega}_{h}^{\,0,K}$, $K\geq 1$, on $\bar{\Omega}$ with the nodes
\begin{gather*}
 x_{2kN_0+l}=2k\tfrac{X}{K}+\tfrac{x_l^0}{K},\ 0\leq 2k<K,\ \
 x_{(2k-1)N_0+l}=2k\tfrac{X}{K}-\tfrac{x_{N_0-l}^0}{K},\ 1\leq 2k-1<K,\ \ x_N=X
\end{gather*}
for any $0\leq l\leq N_0-1$,
with $N=N_0K$ and the mean step $h_\omega=X/N=h_{\omega^0}/K$,
together with the related extension operator $\Pi_K$: $H(\overline{\omega}_h^{\,0})\to H(\overline{\omega}_h^{\,0,K})$ such that
\begin{gather*}
 \Pi_Kw_{2kN_0+l}=w_l,\ 0\leq2k<K,\ \
 \Pi_Kw_{(2k-1)N_0+l}=-w_{N_0-l},\ 1\leq 2k-1<K,
\end{gather*}
for any $0\leq l\leq N_0-1$, see \cite{ZC18}.
The next result has also been proved in \cite{ZC18}.
\begin{proposition}
\label{prop3}
Let $\{\lambda^{(0)},e\}$ be an eigenpair of problem \eqref{eigprob} for $\overline{\omega}_h=\overline{\omega}_h^{\,0}$.
Then \linebreak
$\{\lambda^{(0)}K^2,\Pi_Ke\}$ is an eigenpair of problem \eqref{eigprob} for $\overline{\omega}_h=\overline{\omega}_h^{\,0,K}$.
\end{proposition}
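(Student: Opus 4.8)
The plan is to verify directly that the pair $\{\lambda^{(0)}K^2,\Pi_Ke\}$ solves the generalized eigenvalue problem \eqref{eigprob} on the refined mesh $\overline{\omega}_h^{\,0,K}$, using only the given eigenrelation $-\Lambda e=\lambda^{(0)}s_Ne$ on $\overline{\omega}_h^{\,0}$. The construction of $\overline{\omega}_h^{\,0,K}$ assembles $K$ copies of $\overline{\omega}_h^{\,0}$, each contracted by the factor $1/K$ and alternately reflected, so the argument rests on two elementary invariances of the operators. First, under a uniform contraction of all local steps by $1/K$ the operator $\Lambda$ is multiplied by $K^2$ (its stencil carries two inverse-step factors), whereas $s_N$ is left unchanged because its coefficients $\alpha_h$, $\gamma_h$, $\beta_h$ depend only on the ratios of neighbouring steps. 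Second, under a reflection of the local stencil the roles of $h$ and $h_+$ are interchanged, so that $\alpha_h$ and $\beta_h$ swap while $\gamma_h$ is preserved; combined with the sign flip that $\Pi_K$ applies on the reflected blocks, this makes both $\Lambda(\Pi_Ke)$ and $s_N(\Pi_Ke)$ acquire an overall minus sign that cancels in the eigenrelation.

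First I would treat the nodes lying strictly inside a single block. For a forward (non-reflected) block the three stencil values of $\Pi_Ke$ reproduce those of $e$ at the corresponding base node, so the two invariances give $\Lambda(\Pi_Ke)=K^2(\Lambda e)$ and $s_N(\Pi_Ke)=s_Ne$ there; substituting $-\Lambda e=\lambda^{(0)}s_Ne$ yields exactly $-\Lambda(\Pi_Ke)=\lambda^{(0)}K^2 s_N(\Pi_Ke)$. For a reflected block I would use the index map $l\mapsto N_0-l$ together with the swap $\alpha_h\leftrightarrow\beta_h$ and the sign flip in $\Pi_K$ to obtain $\Lambda(\Pi_Ke)=-K^2(\Lambda e)$ and $s_N(\Pi_Ke)=-s_Ne$ at the mirrored base node $N_0-l$; the two minus signs cancel and the same fine-mesh eigenrelation follows.

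The hard part will be the interior junction nodes $x_{kN_0}$, $1\leq k\leq K-1$, where two blocks of opposite orientation meet and the stencil straddles the interface. The key observation that unlocks these nodes is that, by the reflection symmetry of the construction, the two steps meeting at a junction are equal, so the stencil is locally uniform and hence $\alpha_h=\gamma_h=\beta_h=1$ there. Moreover $\Pi_Ke$ vanishes at every such junction, since its value equals $e_0$ or $-e_{N_0}$, both zero by the boundary conditions for $e\in H_h$, while the two neighbouring values are antisymmetric, of the form $e_{N_0-1}$ and $-e_{N_0-1}$. Plugging these into the locally uniform stencils I would find that both $\Lambda(\Pi_Ke)$ and $s_N(\Pi_Ke)$ vanish at the junction, so the eigenrelation holds there trivially as $0=0$.

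Finally I would confirm that $\Pi_Ke$ is an admissible eigenfunction: it lies in $H_h$ on $\overline{\omega}_h^{\,0,K}$ because its boundary values at $x_0=0$ and $x_N=X$ reduce to $\pm e_0$ or $\pm e_{N_0}$, all zero, and it is single-valued at the shared junction nodes by the same vanishing; non-triviality is inherited from $e\not\equiv0$. The whole proof thus reduces to the bookkeeping of the two invariances, and the only genuinely delicate point is the junction analysis, which the local uniformity together with the antisymmetry of the neighbouring values resolves cleanly.
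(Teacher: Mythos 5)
Your proof is correct. A literal comparison is not possible here, because the paper does not prove Proposition \ref{prop3} at all --- it only remarks that the result ``has also been proved in'' \cite{ZC18}; your direct verification (the $K^{2}$-homogeneity of $\Lambda$ and scale invariance of the coefficients of $s_N$ on the contracted blocks, the $\alpha_h\leftrightarrow\beta_h$ swap combined with the sign flip of $\Pi_K$ on reflected blocks, and the junction analysis via equal adjacent steps and antisymmetric neighbouring values) is exactly the natural argument that the citation stands in for, so it usefully fills a gap the paper leaves to an external reference. The only cosmetic slip is at junctions of the form $x_{2kN_0}$ (odd block meeting even block), where the neighbouring values of $\Pi_Ke$ are $-e_1$ and $e_1$ rather than $e_{N_0-1}$ and $-e_{N_0-1}$; the antisymmetry you invoke still holds verbatim, so the conclusion that both $\Lambda(\Pi_Ke)$ and $s_N(\Pi_Ke)$ vanish there, making the eigenrelation trivial at junctions, is unaffected.
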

\begin{corollary}
For the meshes $\overline{\omega}_h=\overline{\omega}_h^{\,0,K}$, the necessary condition \eqref{necess cond} takes the form
\begin{equation} \label{eq:1}
 a^2|\lambda_I^{(0)}|h_{\omega^0}^2\tfrac{\tau}{h_\omega^2} \leq \tfrac92\vk+O(\tau)\ \ \text{as}\ \ \tau\to 0.
\end{equation}
Notice that here $\tau$ stands on the left in contrast to $\tau^2$ in \eqref{stabcond} in the case of the uniform mesh.
\end{corollary}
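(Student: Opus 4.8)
The plan is to specialize the second inequality in \eqref{necess cond} to the concrete family $\overline{\omega}_h^{\,0,K}$ by feeding in the explicit eigenvalue scaling furnished by Proposition \ref{prop3}; the argument is essentially a single substitution followed by a change of variable from $K$ to the mean step $h_\omega$. The only point that genuinely needs care is the uniformity of the remainder term, which I flag as the main (and rather mild) obstacle below.

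First I would fix an eigenpair $\{\lambda^{(0)},e\}$ of the base problem \eqref{eigprob} on $\overline{\omega}_h^{\,0}$ with $\lambda_I^{(0)}\neq 0$, the nontrivial case whose realizability is addressed in Section 3. By Proposition \ref{prop3}, $\{\lambda^{(0)}K^2,\Pi_Ke\}$ is then an eigenpair of \eqref{eigprob} on $\overline{\omega}_h^{\,0,K}$. Since $K$ is a positive integer, the factor $K^2$ is a positive real scalar, so the eigenvalue $\lambda=\lambda^{(0)}K^2$ splits as $\lambda_R=\lambda_R^{(0)}K^2$, $\lambda_I=\lambda_I^{(0)}K^2$; in particular $\lambda_I\neq 0$, so this eigenvalue falls under the hypotheses of Proposition \ref{prop2}.

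Next I would invoke the second necessary condition $a^2|\lambda_I|\tau\leq\tfrac92\vk+O(\tau)$ from \eqref{necess cond}, which is the consequence of the stability bound \eqref{stabbound} (via \eqref{spectrcond} and Proposition \ref{prop2}) that actually sees the imaginary part and hence drives the non-dissipative growth. Substituting $\lambda_I=\lambda_I^{(0)}K^2$ turns it into $a^2|\lambda_I^{(0)}|K^2\tau\leq\tfrac92\vk+O(\tau)$. I would then eliminate $K$ through the defining relation $h_\omega=h_{\omega^0}/K$, i.e. $K^2=h_{\omega^0}^2/h_\omega^2$, which rewrites the inequality as $a^2|\lambda_I^{(0)}|h_{\omega^0}^2\,\tfrac{\tau}{h_\omega^2}\leq\tfrac92\vk+O(\tau)$, exactly \eqref{eq:1}.

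The main obstacle, such as it is, concerns the remainder: for \eqref{eq:1} to carry weight across the whole refining family (i.e. as $K\to\infty$) rather than for a single fixed $K$, the $O(\tau)$ in the second part of \eqref{necess cond} must be uniform in the eigenvalue, with a constant depending only on $\vk$. Tracing the proof of Proposition \ref{prop2}, the bound \eqref{mu neccond} gives $\tfrac12\tau^2a^2|\lambda_I|\leq b_{\ve_\tau}\vk_\tau\tau$ with $b_{\ve_\tau}=\tfrac94+O(\vk\tau)$ and $\vk_\tau=\vk(1+O(\vk\tau))$, so the remainder is in fact $O(\vk^2\tau)$ with an absolute constant, independent of $\lambda$ and hence of $K$; this legitimizes the substitution uniformly. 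Granting this, the concluding remark reads off at once: with $h_{\omega^0}$ and $\lambda_I^{(0)}\neq 0$ frozen by the base mesh and $\vk$ fixed, \eqref{eq:1} forces $\tau/h_\omega^2$ to stay bounded, i.e. the parabolic-type restriction $\tau\leq c_0h_\omega^2$, the linear power of $\tau$ on the left of \eqref{eq:1} being in stark contrast to the quadratic power in the hyperbolic CFL condition \eqref{stabcond}.
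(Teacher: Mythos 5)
Your proof is correct and follows exactly the route the paper intends: the corollary is an immediate consequence of substituting the eigenvalue scaling $\lambda_I=\lambda_I^{(0)}K^2$ from Proposition \ref{prop3} into the second condition of \eqref{necess cond} and eliminating $K$ via $K^2=h_{\omega^0}^2/h_\omega^2$. Your additional check that the $O(\tau)$ remainder in \eqref{necess cond} depends only on $\vk$ (not on $\lambda$, hence not on $K$) is a point the paper leaves implicit, and it is verified correctly from \eqref{mu neccond}.
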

Let us briefly discuss the case of general $\sigma$ in scheme \eqref{eqhm}-\eqref{eqh0}.
Then the explicit formula \eqref{expl form} remains valid with $\mu=(1+(\sigma-0.5)\tau^2a^2\lambda)/(1+\alpha)$
and $\alpha=\sigma\tau^2a^2\lambda$.
One can check easily that Proposition \ref{prop1}, Proposition \ref{prop2}, Item 1 and thus Corollary \ref{cor1} remain valid independently of $\sigma$.
Proposition \ref{prop2}, Item 2 can be extended for $\sigma<\frac14$ but we do not come into details.
Instead, notice only that now $|\mu_I|=\tfrac{a^2\tau^2|\lambda_I|}{2|1+\alpha|^2}$ and thus,
under the qualitatively natural assumption that $\tau^2a^2|\lambda|\leq C_0$,
we have $|1+\alpha|\leq 1+|\sigma|C_0$ and
in the proof of Proposition \ref{prop2}, Item 2 inequality \eqref{mu neccond} is replaced easily by the new necessary condition
\[
 a^2\tau|\lambda_I|\leq2(1+|\sigma|C_0)^2\vk_\tau
\]
with the same $\vk_\tau$
that is in the spirit of the second condition \eqref{necess cond}.

\medskip\par\noindent
\textbf{3. Numerical experiments}
\medskip\par

First, applying the brute force search strategy,
the following critical mesh $\overline{\omega}_{h}^{\,0}$ on $[0,1]$, with $N_0=14$ nodes and the space mesh steps
$(h_1,\ldots,h_{14})=\frac{1}{51}(2,2,1,4,2,1,3,3,6,5,6,5,6,5)$
has been found (see also \cite{ZC18}).
The corresponding maximal in modulus $\lambda=3529.9\pm\textrm{i}27.2044$ for problem \eqref{eigprob} are complex whereas the other eigenvalues are positive real.
Then the above defined family of meshes $\overline{\omega}_{h}^{\,0,K}$ is applied with $N=KN_0$,
for several values of $K$ and time segments $[0,T]$.
Note that then $\vk_0\approx 0.2289K$ in \eqref{asymp formula} and \eqref{normyM}.

We take $a=1$ and the initial data
\[
u_0(x) = \exp\big[-\big(10(x-0.5)\big)^4\big],\ \
u_1(x) = 0,  \quad 0 \leq x \leq 1.
\]
Let $v_i^0=u_0(x_i)$.
Define the error $e_K(T):=\max_{0\leq j\leq KN_0}|u(x_j,T)-v_j^M|$, where $\tau M=T$.
Its catastrophic \textit{growth} for $\tau = 0.01/K$ as $K$ increases is seen from the following numerical results for $T=2$ and 4:
\begin{align*}
& e_{20}(2) \approx 1.443\cdot 10^{-4},\ \; e_{40}(2) \approx 3.867\cdot 10^{-2},\ \;
  e_{60}(2) \approx 64.497,\ \;             e_{80}(2) \approx 1.082\cdot 10^5,\\
& e_{10}(4) \approx 3.648\cdot 10^{-3},\ \; e_{20}(4) \approx 1.345,\ \;
  e_{30}(4) \approx 1926,\ \;               e_{40}(4) \approx 3.281\cdot 10^6.
\end{align*}
The results for smaller values of $\tau$ are even worse.

In addition, $e_{20}(4)\approx 39.9225$ and $e_{20}(6)\approx 609643$ for $M=14400$.
Consequently the practical rate of the exponential growth of the error in time can be estimated as $\vk_{pr}:=\half\ln(e_{20}(6)/(e_{20}(4))\approx 4.817$ that is in a good agreement with its theoretical estimate $\vk_0\approx 4.579$ for $K=20$.

These numerical results confirm the above theoretical ones that the rate of non-dissipativity of
the compact higher-order scheme rapidly grows as $T$ or $K$ increases.

\medskip\par\noindent
\textbf{Acknowledgements}
\medskip\par
The work of the first author has been accomplished within the framework of the Academic Fund Program at the
National Research University Higher School of Economics (HSE) in 2019--2020 (grant no. 19-01-021)
and the Russian Academic Excellence Project ``5-100''
as well as the Russian Foundation for the Basic Research, grant no.~19-01-00262.
\renewcommand{\refname}{\large\textbf{References}}
\small{
}
\end{document}